\newtheorem{theorem}{Theorem}[section]
\newtheorem{lemma}[theorem]{Lemma}
\newtheorem{proposition}[theorem]{Proposition}
\theoremstyle{definition}
\newtheorem{definition}[theorem]{Definition}
\newtheorem{remark}[theorem]{Remark}
\newtheorem{question}[theorem]{Question}
\newcommand{\aitext}[1]{\textsf{#1}}
\newcommand\doublecheck{\textcolor{blue}{\checked\kern-0.6em\checked}}
\definecolor{humancolor}{RGB}{0, 102, 204}    
\definecolor{aicolor}{RGB}{180, 90, 50}       
\newlength{\marginbarwidth}
\newlength{\marginbarsep}
\newtcolorbox{humanparagraph}{%
  enhanced,
  breakable,
  blanker,                                    
  borderline west={\marginbarwidth}{-\marginbarsep}{humancolor},
  left=0pt,                                   
  right=0pt,
  top=2pt,
  bottom=2pt,
  before skip=0.5\baselineskip,
  after skip=0.5\baselineskip,
  overlay unbroken and first={
    \node[humancolor, font=\tiny\scshape, rotate=90, anchor=north]
      at ([xshift=-\marginbarsep-\marginbarwidth/2-12pt, yshift=-13pt]frame.north west) {Human};
  }
}
\newtcolorbox{aiparagraph}{%
  enhanced,
  breakable,
  blanker,
  borderline west={\marginbarwidth}{-\marginbarsep}{aicolor},
  left=0pt,
  right=0pt,
  top=2pt,
  bottom=2pt,
  before skip=0.5\baselineskip,
  after skip=0.5\baselineskip,
  overlay unbroken and first={
    \node[aicolor, font=\tiny\scshape, rotate=90, anchor=north]
      at ([xshift=-\marginbarsep-\marginbarwidth/2-12pt, yshift=-5pt]frame.north west) {AI};
  }
}
\newcommand{\humaninline}[1]{{\color{humancolor}\uwave{#1}}}
\newcommand{\aiinline}[1]{{\color{aicolor}\uwave{#1}}}
\newenvironment{authornote}{%
  \begin{quotation}
  \footnotesize
  \noindent\textsc{Author's note.}\enspace\ignorespaces
}{%
  \end{quotation}
}
\newenvironment{altabstract}{%
  \begin{quotation}
  \footnotesize
  \noindent\textsc{Abstract.}\enspace\ignorespaces
}{%
  \end{quotation}
}
\newcommand{\Mbar}{\overline{\mathcal{M}}}
\newcommand{\ee}{\mathbf{e}}
\newcommand{\ZZ}{\mathbb{Z}}
\newcommand{\QQ}{\mathbb{Q}}
\title[Extremal descendant integrals]{Extremal descendant integrals on moduli spaces of curves:
An inequality discovered and proved in collaboration with AI}
\author{Johannes Schmitt}
\address{Department of Mathematics, ETH Z\"urich, Switzerland}
\email{johannes.schmitt@math.ethz.ch}
\date{\today}
\begin{document}

\maketitle
\begin{altabstract}
    \begin{humanparagraph}
For the pure $\psi$-class intersection numbers $D(\textbf{e})=\langle \tau_{e_1} \cdots \tau_{e_n} \rangle_g$ on the moduli space $\Mbar_{g,n}$ of stable curves, we determine \aiinline{for} which choices of $\textbf{e}=(e_1, \ldots, e_n)$ the value of $D(\textbf{e})$ becomes extremal.
The intersection number is minimal for powers of a single $\psi$-class (i.e. all $e_i$ but one vanish), whereas maximal values are obtained for balanced vectors ($|e_i - e_j| \leq 1$ for all $i,j$). The proof uses the nefness of the $\psi$-classes combined with Khovanskii--Teissier log-concavity.
\end{humanparagraph}
\end{altabstract}

\begin{authornote}
\begin{humanparagraph}
The question of finding extremal values of the $\psi$-intersection  numbers first occurred to the author when looking for a toy problem to explore using the software OpenEvolve \cite{openevolve}. The conjecture that balanced exponents lead to the maximal values is a natural guess, and was indeed discovered quickly by the tested model. 
To the author's knowledge, this optimization-style problem was novel and not covered by existing literature: it is a simple and natural question, but somewhat \aiinline{orthogonal} to the questions usually studied in enumerative geometry.
After some experimental verification and presenting the conjecture to several colleagues (who confirmed its open status), it was submitted as a problem to the IMProofBench  project \cite{IMProofBench}. This project collects research level mathematics questions and tests them against a range of AI models. As part of this evaluation, the conjecture was independently proven by several such models, without human intervention (see Appendix \ref{sec:methods} for further details). 

The proof itself turns out to be unexpectedly simple, bypassing the rich structural results on descendant invariants on $\Mbar_{g,n}$, and instead using just a few basic properties of $\psi$-classes (nefness and symmetry) together with general results on intersection numbers of divisor classes.
As such, while the obtained theorem is a neat little result and original contribution to the literature, it would arguably be on the borderline of notability for a mathematical publication.

We still opted to use this opportunity and write the present note, both to present the result but in particular also to document the process that went into its discovery and subsequent write-up. Apart from showcasing the current level of sophistication of AI for these tasks, we also propose some best practices for attribution of AI use in mathematical writing (see Appendix \ref{sec:AI_attribution}). We apply these principles in the current paper, e.g. by marking human-generated text with a blue bar or \humaninline{underlines}, whereas AI-generated text is marked in brown.

After an initial draft of the paper was completed, we also added an alternative version of the proof of the main theorem (Section \ref{sec:formalized}), partially verified in Lean \cite{lean4}. The argument is split into a purely combinatorial optimization theorem, formalized by Claude Code and GPT 5.2 with minimal guidance by the author, and a geometric proposition proving the applicability of the optimization result to the intersection numbers above.
\end{humanparagraph}
\end{authornote}
\clearpage
\section{Introduction}\label{sec:intro}

\begin{aiparagraph}
Let $g, n \in \ZZ_{\geq 0}$ with $2g - 2 + n > 0$. The moduli space $\Mbar_{g,n}$ of stable curves of genus $g$ with $n$ marked points has dimension $d = 3g - 3 + n$. For each marked point $i$, the cotangent line bundle $\mathbb{L}_i$ at that marking defines the \emph{$\psi$-class} $\psi_i = c_1(\mathbb{L}_i) \in H^2(\Mbar_{g,n}, \QQ)$. The \emph{descendant integrals} (or \emph{intersection numbers of $\psi$-classes})
\[
\langle \tau_{e_1} \cdots \tau_{e_n} \rangle_g := \int_{\Mbar_{g,n}} \psi_1^{e_1} \cdots \psi_n^{e_n}
\]
are rational numbers that vanish unless $\sum_{i=1}^n e_i = d$. These integrals play a central role in the Witten--Kontsevich theorem \cite{Witten1991, Kontsevich1992}, which establishes that their generating function is a $\tau$-function for the KdV hierarchy.

Given $g$ and $n$, let
\[
E(g,n) = \left\{ \ee = (e_1, \ldots, e_n) \in \ZZ_{\geq 0}^n : \sum_{j=1}^n e_j = 3g - 3 + n \right\}
\]
be the set of valid exponent vectors. Define the function $D : E(g,n) \to \QQ$ by $D(\ee) = \langle \tau_{e_1} \cdots \tau_{e_n} \rangle_g$.

\begin{definition}
A vector $\ee \in E(g,n)$ is \emph{balanced} if $|e_i - e_j| \leq 1$ for all $1 \leq i, j \leq n$.
\end{definition}

Equivalently, writing $3g - 3 + n = an + b$ with $0 \leq b < n$ via Euclidean division, the balanced vectors are precisely the permutations of $(a, \ldots, a, a+1, \ldots, a+1)$ with $a$ appearing $n - b$ times and $a + 1$ appearing $b$ times.

\begin{theorem}[Extremal descendants]\label{thm:main}
Let $g \in \ZZ_{\geq 0}$ \humaninline{and $n \in \ZZ_{>0}$} with $2g - 2 + n > 0$.
\begin{enumerate}
    \item[\textup{(a)}] \textup{(Minimum)} The function $D$ achieves its minimum at the concentrated vector $(3g-3+n, 0, \ldots, 0)$ (or any permutation thereof), with value
    \begin{equation} \label{eqn:min}
        \langle \tau_{3g-3+n} \humaninline{\cdot \tau_0^{n-1} }\rangle_g = \frac{1}{24^g \, g!}.
    \end{equation}
    \item[\textup{(b)}] \textup{(Maximum)} The function $D$ achieves its maximum on a balanced vector $\ee \in E(g,n)$.
\end{enumerate}
\end{theorem}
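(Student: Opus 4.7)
The plan is to reduce both parts to a single pairwise log-concavity statement for the intersection numbers. Fix distinct indices $i, j$ and background exponents $(e_\ell)_{\ell \neq i, j}$; set $s = e_i + e_j$ and $\alpha = \prod_{\ell \neq i, j}\psi_\ell^{e_\ell}$, and consider
\[
F(t) := \int_{\Mbar_{g,n}} \psi_i^t \, \psi_j^{s-t} \cdot \alpha, \qquad t = 0, 1, \ldots, s.
\]
I would first establish three properties of $F$: positivity (descendant integrals are strictly positive, a standard consequence of Witten--Kontsevich or of topological recursion), symmetry $F(t) = F(s-t)$ (from the $S_n$-action on $\Mbar_{g,n}$ that swaps the $i$-th and $j$-th markings, exchanging $\psi_i$ and $\psi_j$ while fixing $\alpha$), and log-concavity $F(t)^2 \geq F(t-1)F(t+1)$ for $1 \leq t \leq s-1$. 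The log-concavity is the crux and is precisely the Alexandrov--Fenchel (mixed Khovanskii--Teissier) inequality, applied with $L = \psi_i$, $M = \psi_j$, and the remaining $d-2$ nef classes given by $\psi_i^{t-1}\psi_j^{s-t-1}\alpha$.

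A positive, symmetric, log-concave sequence on $\{0, 1, \ldots, s\}$ is automatically unimodal: it peaks at $\lfloor s/2 \rfloor$ (and $\lceil s/2 \rceil$ if $s$ is odd) and attains its minimum at the two endpoints $t = 0, s$. This single observation drives both parts. For (b), if $\ee$ is not balanced then some pair satisfies $e_i \geq e_j + 2$, so $t = e_i$ lies strictly beyond the mode of $F$ and hence $F(e_i - 1) \geq F(e_i)$; equivalently, the balancing move $(e_i, e_j) \mapsto (e_i - 1, e_j + 1)$ does not decrease $D$. Iterating strictly decreases $\sum_k e_k^2$, so the process terminates at a balanced maximizer. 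For (a), whenever two coordinates $e_i, e_j$ are both positive the endpoint-minimum property gives $F(s) = F(0) \leq F(e_i)$, so concentrating all $(i,j)$-weight onto one index does not increase $D$. Iterating reduces any $\ee$ to a single-nonzero-coordinate vector, whose value by the $S_n$-symmetry of $D$ is $\langle \tau_{3g-3+n}\tau_0^{n-1}\rangle_g$; $n-1$ applications of the string equation then collapse this to $\langle \tau_{3g-2}\rangle_{g,1} = 1/(24^g g!)$, the classical one-point genus-$g$ descendant.

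The main obstacle is justifying the precise form of Alexandrov--Fenchel required here: the classical statement covers nef divisors on an irreducible projective variety, whereas $\Mbar_{g,n}$ is a smooth proper Deligne--Mumford stack. One must either invoke an existing extension to this setting, or descend the $\psi$-classes to nef $\mathbb{Q}$-Cartier classes on the projective coarse moduli space and apply Khovanskii's theorem there (possibly after pulling back to a resolution or Kawamata cover on which nefness is preserved). Once this foundational step is granted, everything else is a short convexity argument on a finite sequence together with a textbook evaluation of a single descendant integral.
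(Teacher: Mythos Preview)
Your proposal is correct and essentially identical to the paper's proof: the same two-point slice $F(t)$, palindromicity from the $S_n$-action, Khovanskii--Teissier log-concavity from nefness of the $\psi$-classes, the resulting unimodality, balancing/concentrating moves with $\sum e_k^2$ as termination measure, and the string-equation reduction for the minimum value (the paper even flags and resolves the same DM-stack-versus-scheme issue via a finite cover by a projective scheme). One small slip to fix: for $g=0$ your target $\langle\tau_{3g-2}\rangle_{g,1}$ is undefined, and the paper explicitly notes that in this case one stops the string equation after only $n-3$ applications at $\langle\tau_0^3\rangle_0 = 1 = 1/(24^0\,0!)$.
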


The \humaninline{formula for the minimal value} in part (a) is well-known (\humaninline{see e.g. equation (5.36) in \cite{MR1180858}}); it also follows from the generating function identity
\begin{equation} \label{eqn:onepoint}
1 + \sum_{g \geq 1} t^g \int_{\Mbar_{g,1}} \psi_1^{3g-2} = \exp(t/24)
\end{equation}
established in \cite{FaberPandharipande2000}. Part (b) appears to be new.

In genus zero, part (b) follows immediately from the closed formula
\begin{equation}\label{eq:genus0}
\langle \tau_{e_1} \cdots \tau_{e_n} \rangle_0 = \frac{(n-3)!}{e_1! \cdots e_n!},
\end{equation}
valid when $\sum e_i = n - 3$. The multinomial coefficient on the right is maximized precisely when the $e_i$ are as equal as possible. For higher genus, no such closed formula exists, and the result requires the more sophisticated argument given in Section~\ref{sec:proof}.

\begin{remark}
The \humaninline{result above was also} verified computationally for all $0 \leq g \leq 11$ and $1 \leq n \leq 11$ using the SageMath \cite{sagemath} software package  \texttt{admcycles} \cite{admcycles}.
\end{remark}

\begin{remark}
Uniqueness need not hold: due to symmetry, all permutations of a balanced vector achieve the same value, and in some cases plateaus may occur where strict inequality fails.
\end{remark}
\end{aiparagraph}
\begin{humanparagraph}
Given the simple formula \eqref{eqn:min} for the minimal descendant, a natural question is:
\begin{question}
For  $3g-3+n = an + b$, does there exist a simple description (closed formula, generating function, recursion, $\ldots$) for the balanced descendant invariant
\begin{equation} \label{eqn:balanced}
    \langle \tau_{a} ^{n-b} \tau_{a+1}^b \rangle_{\aiinline{g}} = \max_{\textbf{e} \in \aiinline{E(g,n)}} D(\textbf{e})\,?
\end{equation}
\end{question}
A particularly simple regime might be the case $g \geq 2$ and $n \geq 3g-3$, where the dilaton equation allows to reduce all invariants to
\begin{equation} \label{eqn:dilaton}
    \langle \tau_1^{n-(3g-3)} \tau_2^{3g-3} \rangle_{\aiinline{g}} = \frac{(2g-3+n)!}{(5g-6)!} \langle \tau_2^{3g-3} \rangle_{\aiinline{g}}\,.
\end{equation}
\end{humanparagraph}
\begin{aiparagraph}
  
Questions of extremizing enumerative invariants within a finite parameter space are somewhat unusual in enumerative geometry, where one more typically studies generating functions or asymptotic behavior. It would be interesting to investigate analogous extremal questions for other families of intersection numbers, such as Hodge integrals or double Hurwitz numbers.
\end{aiparagraph}

\medskip
\begin{humanparagraph}
\noindent\textbf{Outline.} 
In Section \ref{sec:proof} we present the proof of Theorem~\ref{thm:main}, as it was found and stated by GPT-5 (for part (b)) and Gemini 3 Pro (for part (a)). We chose to leave the formulation largely untouched to allow the reader to see (for better or worse) the current level of proficiency in proof generation exhibited by these models.

Conversely, section \ref{sec:formalized} presents an alternative argument for Theorem~\ref{thm:main}, which can be read independently of Section \ref{sec:proof}. The treatment there splits the proof into:
\begin{itemize}
    \item a purely combinatorial optimization result (Theorem \ref{thm:main_theorem_paper}) which has been formalized in Lean \cite{lean4} based on the mathlib library \cite{mathlib},
    \item a geometric argument (Proposition  \ref{prop:D_satisfies_hypotheses}), with proof entirely human-written, proving that the descendant function $D$ from Theorem~\ref{thm:main} satisfies the assumptions of this optimization result.
\end{itemize}
In Appendix~\ref{sec:methods} we describe the various forms of AI assistance that went into the preparation of the present article. 
{In Appendix~\ref{sec:AI_attribution} we collect some proposals and tools for attributing AI contributions in mathematical papers.}

\medskip
\noindent\textbf{Acknowledgments.} 
We want to thank Javier Gómez-Serrano and Jaume De Dios Pont for inspiring and organizing, respectively, the OpenEvolve hackathon at which the present project was initiated. We are grateful to our colleagues who gave comments on Theorem \ref{thm:main}, in particular for confirming that to their knowledge it is not already covered in the literature. 

The proposals in Appendix \ref{sec:AI_attribution} were inspired by both a \href{https://ai-math.zulipchat.com/#narrow/channel/539992-Web-public-channel---AI-Math/topic/Best.20practices.20for.20incorporating.20AI.20etc.2E.20in.20papers/near/546518354}{public discussion thread started by Terry Tao} and conversations with colleagues, such as Jasper Dekoninck, Jeremy Feusi, Tim Gehrunger as well as the members of the  writing group of the \href{https://www.lorentzcenter.nl/index.php?pntType=ConPagina&id=2150&conBestandId=4187&pntHandler=DownloadAction}{Leiden declaration on the use of AI in mathematical research}, in particular Jarod Alper, Michael Harris, David Holmes, Ursula Martin and Jim Portegies, whom we thank warmly!

Finally, we are grateful to Jarod Alper, Honglu Fan, Jeremy Feusi, Tim Gehrunger, David Holmes and Jim Portegies for comments on various drafts of the present article.

The author was supported by SwissMAP and the  IMProofBench project received financial support from Google/DeepMind as part of their Google Gift program.

\end{humanparagraph}


\section{Proof of the main theorem}\label{sec:proof}
\begin{aiparagraph}
We write $d = \dim \Mbar_{g,n} = 3g - 3 + n$ throughout.

\subsection{\texorpdfstring{\humaninline{Maximal descendants}}{Maximal descendants}}

The following proof was generated by OpenAI's GPT-5 model. It is reproduced with only minimal formatting changes; see Appendix~\ref{sec:methods} for context. \humaninline{Note:} {The original AI output contained an error in the genus-0 formula stated in the remarks, giving $(n-3)!/\prod_i(2e_i-1)!!$ instead of the correct formula~\eqref{eq:genus0}. This has been corrected.}

\begin{proof}[Proof of Theorem \ref{thm:main} (b)]
We prove that among all $\ee$ with fixed sum, $D(\ee)$ attains its maximum at a balanced vector, i.e., when the entries differ by at most~1.

\medskip
\noindent\textbf{Step 1} (Two-point slice and its basic properties). Fix distinct indices $i \neq j$ and set
\[
M := \prod_{k \neq i,j} \psi_k^{e_k}, \qquad q := e_i + e_j.
\]
For $t = 0, 1, \ldots, q$, consider the one-variable sequence
\[
S_t := \int_{\Mbar_{g,n}} \psi_i^t \, \psi_j^{q-t} \, M.
\]
By the natural action of the symmetric group permuting the markings on $\Mbar_{g,n}$, the classes $\psi_i$ and $\psi_j$ are exchanged by an automorphism that leaves $M$ invariant; hence
\[
S_t = S_{q-t} \qquad \text{(palindromicity)}.
\]

\medskip
\noindent\textbf{Step 2} (Khovanskii--Teissier log-concavity). It is standard that each $\psi_i$ is nef on $\Mbar_{g,n}$. For nef classes $A, B$ and fixed nef classes $H_3, \ldots, H_d$ on a projective (orbifold) variety of dimension $d$, the Khovanskii--Teissier inequalities \humaninline{as described in Variant 1.6.2 of \cite{MR2095471}} (equivalently, the mixed Hodge--Riemann bilinear relations) give the discrete log-concavity
\[
\left( \int A^t B^{q-t} H_3 \cdots H_d \right)^2 \geq \left( \int A^{t-1} B^{q-t+1} H_3 \cdots H_d \right) \left( \int A^{t+1} B^{q-t-1} H_3 \cdots H_d \right)
\]
for all $1 \leq t \leq q - 1$. Applying this with $A = \psi_i$, $B = \psi_j$, $H_3 \cdots H_d = M$, we obtain
\[
S_t^2 \geq S_{t-1} S_{t+1} \qquad (1 \leq t \leq q - 1) \qquad \text{(log-concavity)}.
\]

\medskip
\noindent\textbf{Step 3} (Monotonicity towards the middle). For a positive log-concave sequence, the ratios $R_t := S_{t+1}/S_t$ are weakly decreasing in $t$. Using palindromicity,
\[
R_{q-t-1} = \frac{S_{q-t}}{S_{q-t-1}} = \frac{S_t}{S_{t+1}} = \frac{1}{R_t}.
\]
Hence for $t \leq \lfloor q/2 \rfloor - 1$ we have $R_t \geq R_{q-t-1} = 1/R_t$, so $R_t \geq 1$, i.e.,
\[
S_{t+1} \geq S_t \quad \text{for } t < \frac{q}{2}, \qquad S_{t-1} \geq S_t \quad \text{for } t > \frac{q}{2}.
\]
In words: along the two-point slice $\{(t, q-t) : 0 \leq t \leq q\}$, the values increase up to the middle and then decrease.

\medskip
\noindent\textbf{Step 4} (The balancing step). Suppose $e_i \geq e_j + 2$. Then $e_i > q/2$. By the monotonicity just proved,
\[
D(e_1, \ldots, e_i, \ldots, e_j, \ldots, e_n) = S_{e_i} \leq S_{e_i - 1} = D(e_1, \ldots, e_i - 1, \ldots, e_j + 1, \ldots, e_n).
\]
Thus, whenever two entries differ by at least 2, transferring one unit from the larger to the smaller weakly increases the value of the integral.

\medskip
\noindent\textbf{Step 5} (Conclusion by iteration). Starting from any $\ee \in E(g,n)$, repeatedly apply the balancing step to any pair $(i,j)$ with $|e_i - e_j| \geq 2$. This process terminates at a balanced vector $\ee^*$ (all entries differ by at most 1), and along the way the value of $D$ never decreases. Hence
\[
D(\ee) \leq D(\ee^*) \quad \text{for some balanced } \ee^* \in E(g,n).
\]
In particular, the maximum of $D$ on $E(g,n)$ is achieved at a balanced vector.
\end{proof}

\begin{remark}
The log-concavity in Step~2 is a special case of the Khovanskii--Teissier (or Alexandrov--Fenchel) inequalities for nef classes; it can be proved by restricting to a general complete intersection surface and applying the Hodge index theorem, or via the mixed Hodge--Riemann bilinear relations. The reduction from the Deligne--Mumford stack to a smooth projective variety can be made using a finite level-structure cover; intersection numbers scale by the degree of the cover, so inequalities are preserved.
\end{remark}

\subsection{\texorpdfstring{\humaninline{Minimal descendants}}{Minimal descendants}}

The minimum claim follows from the observation that \humaninline{iteratively} concentrating exponents on a single marking yields \humaninline{a non-increasing sequence of descendant integrals}, combined with the known formula for one-point integrals.

\begin{proof}
We employ the logic of Part (b) in reverse. Let $\ee = (e_1, \ldots, e_n) \in E(g,n)$ be an arbitrary exponent vector. We wish to show that $D(\ee)$ is bounded below by the value of the concentrated vector.

Let $k$ be an index such that $e_k = \max_{1 \leq i \leq n} e_i$. If $\ee$ is not fully concentrated on the $k$-th marking, there exists an index $j \neq k$ such that $e_j > 0$. Consider the pair of indices $(k, j)$ and let $q = e_k + e_j$. Since $e_k$ is maximal in $\ee$, we have $e_k \geq e_j$, which implies $e_k \geq q/2$.

Recall from Step~3 of the proof of part (b) that the sequence $S_t$ (representing the integral as a function of the exponent at the $k$-th marking) is non-increasing for $t \geq q/2$. Consequently, shifting the entire weight from the $j$-th marking to the $k$-th marking does not increase the value of the invariant:
\[
D(\ldots, e_k, \ldots, e_j, \ldots) \geq D(\ldots, e_k + e_j, \ldots, 0, \ldots).
\]
By iteratively applying this procedure for all $j \neq k$ with $e_j > 0$, we transform $\ee$ into the concentrated vector 
$$\ee_{\min} = \humaninline{(0, \ldots, 0, d, 0, \ldots, 0),}$$
(where the weight $d = 3g-3+n$ is at index $k$) while ensuring the value of the integral never increases. Thus $D(\ee) \geq D(\ee_{\min})$.

It remains to calculate the value of this minimum. The integral is given by $\langle \tau_d \tau_0^{n-1} \rangle_g$, where $\tau_0$ denotes the fundamental class. We apply the string equation
\[
\langle \tau_0 \prod_{i=1}^m \tau_{a_i} \rangle_g = \sum_{j=1}^m \langle \tau_{a_j-1} \prod_{i \neq j} \tau_{a_i} \rangle_g
\]
repeatedly to remove the $n-1$ insertions of $\tau_0$.
\humaninline{This works for $g \geq 1$; for $g=0$ we only remove $n-3$ such insertions, to avoid falling into the unstable range $n \leq 2$.} In each step, the term on the right-hand side vanishes unless we act on the single non-zero exponent (since $\tau_{-1}$ is zero). We therefore obtain the reduction
\[
\langle \tau_{3g-3+n} \tau_0^{n-1} \rangle_g = \langle \tau_{3g-4+n} \tau_0^{n-2} \rangle_g = \cdots = \langle \tau_{3g-2} \rangle_g.
\]
The value of the one-point integral $\langle \tau_{3g-2} \rangle_g = 1/(24^g g!)$ is a classical result following from the Witten--Kontsevich theorem (specifically, the coefficient of $t^g$ in the partition function restricted to one marked point). \humaninline{See in particular the discussion and references around equation \eqref{eqn:onepoint}.}
\end{proof}
\end{aiparagraph}

\section{Formalized proof via an abstract optimization theorem}\label{sec:formalized}

\begin{aiparagraph}
This section presents an alternative approach to proving Theorem~\ref{thm:main}, based on a Lean 4 formalization available at \href{https://schmittj.github.io/balanced-vectors-blueprint/index.html}{the project blueprint}. The key insight is to separate the \emph{abstract optimization argument}---which applies to any function satisfying certain axioms---from the \emph{geometric input} specific to $\psi$-class intersection numbers. This modular structure clarifies the logical dependencies and yields a fully machine-verified proof of the combinatorial core.

Throughout this section, we work with \emph{weak compositions}: for integers $n \geq 1$ and $d \geq 0$, define
\[
E(n, d) = \left\{ \ee = (e_1, \ldots, e_n) \in \ZZ_{\geq 0}^n : \sum_{j=1}^n e_j = d \right\}.
\]
For $\ee \in E(n,d)$ and distinct indices $i, j$, we write $\ee - \delta_i + \delta_j$ for the vector obtained by decreasing $e_i$ by $1$ and increasing $e_j$ by $1$ (defined when $e_i \geq 1$).

\begin{theorem}[Optimization Theorem - {\href{https://schmittj.github.io/balanced-vectors-blueprint/sect0001.html\#thm:main}{\doublecheck}},{ \href{https://schmittj.github.io/balanced-vectors-blueprint/docs/BalancedVectors.html\#main_theorem_paper}{LEAN}}]\label{thm:main_theorem_paper} 
Let $D : E(n, d) \to \QQ$ be a function satisfying:
\begin{enumerate}
    \item[\textup{(S)}] \textbf{Symmetry}: $D(\ee \circ \sigma) = D(\ee)$ for all permutations $\sigma \in S_n$.
    \item[\textup{(LC)}] \textbf{Log-concavity}: For all $\ee \in E(n,d)$ and distinct $i, j$ with $e_i, e_j \geq 1$,
    \[
    D(\ee)^2 \geq D(\ee - \delta_i + \delta_j) \cdot D(\ee + \delta_i - \delta_j).
    \]
    \item[\textup{(P)}] \textbf{Strict positivity}: $D(\ee) > 0$ for all $\ee \in E(n,d)$.
\end{enumerate}
Then:
\begin{enumerate}
    \item[\textup{(a)}] $D$ achieves its maximum on a balanced vector $($where $|e_i - e_j| \leq 1$ for all $i, j)$.
    \item[\textup{(b)}] $D$ achieves its minimum on a concentrated vector $($where $\ee = d \cdot \delta_k$ for some $k)$.
\end{enumerate}
\end{theorem}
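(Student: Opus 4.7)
The plan is to mirror the ``two-point slice'' argument from Section~\ref{sec:proof}, but now derive the required unimodality purely from the axioms (S), (LC), (P), with no reference to any geometry. The entire proof will rest on a single one-dimensional slice lemma describing the behavior of $D$ when all coordinates except two are held fixed.

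Concretely, fix $\ee \in E(n,d)$ and distinct indices $i, j$, let $q = e_i + e_j$, and for $0 \leq t \leq q$ let $S_t$ denote the value of $D$ at the vector obtained from $\ee$ by replacing the pair $(e_i, e_j)$ with $(t, q-t)$. By symmetry (S), transposing the $i$th and $j$th coordinates gives the palindrome $S_t = S_{q-t}$. Log-concavity (LC) applied at each intermediate vector yields $S_t^2 \geq S_{t-1} S_{t+1}$ for $1 \leq t \leq q-1$, and strict positivity (P) allows us to form the ratios $R_t := S_{t+1}/S_t$, which are then weakly decreasing in $t$. Palindromicity forces $R_{q-t-1} = 1/R_t$, and combining this with monotonicity of the $R_t$ gives $R_t \geq 1$ for $t < q/2$ and $R_t \leq 1$ for $t \geq q/2$. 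Hence $S_t$ is unimodal: its maximum is attained at $t = \lfloor q/2 \rfloor$ or $\lceil q/2 \rceil$, and its minimum at the endpoints $t \in \{0, q\}$.

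Given this slice lemma, part (a) follows from a balancing procedure: whenever $\ee$ is not balanced, choose $i, j$ with $e_i \geq e_j + 2$, and replace $(e_i, e_j)$ by $(e_i - 1, e_j + 1)$; this move lies strictly closer to the middle of the slice, hence weakly increases $D$. The process terminates because the potential $\sum_k e_k^2$ is a nonnegative integer that strictly decreases at each step, and its endpoint is a balanced vector $\ee^*$ with $D(\ee^*) \geq D(\ee)$. Part (b) is symmetric: if $\ee$ has two nonzero entries $e_i, e_j$, replacing $(e_i, e_j)$ with $(q, 0)$ moves to an endpoint of the slice and thus weakly decreases $D$; this reduces the number of nonzero entries by one, so iteration terminates at a concentrated vector $d \cdot \delta_k$.

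The main obstacle I anticipate is the careful bookkeeping in the slice lemma: one needs to verify the signs of the inequalities $R_t \geq 1$ or $R_t \leq 1$ on both sides of the pivot, handle the boundary between $q$ even and $q$ odd, and dispose of the degenerate cases $q \in \{0, 1\}$ (where there is nothing to prove). Strict positivity (P) is used essentially here to form and compare ratios; without it, one would have to treat possible zeros in the log-concave sequence separately, which is a well-known source of technical nuisance in this kind of argument. Once the unimodality is established, both parts reduce to finite greedy procedures governed by clear monotone integer potentials, and no further subtleties should arise—this is presumably why the combinatorial core of the argument lends itself well to a Lean formalization.
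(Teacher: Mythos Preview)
Your proposal is correct and follows essentially the same route as the paper: the slice sequence, palindromicity from (S), log-concavity from (LC), the ratio argument for unimodality, the balancing iteration for (a) with termination via $\sum_k e_k^2$, and the concentrating iteration for (b). The only cosmetic difference is that for (b) you jump straight to the slice endpoint $(q,0)$ and terminate by counting nonzero entries, whereas the paper transfers one unit at a time to the current maximum and terminates because $\max_k e_k$ strictly increases; both rest on the same endpoint-minimum consequence of unimodality.
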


The proof \humaninline{is presented in Sections \ref{sect:slice} to \ref{sect:concentration}. Assuming the theorem for now, } we verify that the descendant integral satisfies the hypotheses \humaninline{above}.

\begin{proposition}\label{prop:D_satisfies_hypotheses}
Let $g, n \in \ZZ_{\geq 0}$ with $2g - 2 + n > 0$ and $n \geq 1$. The function
\[
D : E(n, 3g-3+n) \to \QQ, \qquad D(\ee) = \int_{\Mbar_{g,n}} \psi_1^{e_1} \cdots \psi_n^{e_n}
\]
satisfies conditions \textup{(S)}, \textup{(LC)}, and \textup{(P)} of Theorem~\ref{thm:main_theorem_paper}.
\end{proposition}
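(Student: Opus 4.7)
The plan is to verify the three axioms (S), (LC), (P) in turn. The first two essentially repackage arguments already used in Section~\ref{sec:proof}, while the real content resides in strict positivity.

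For (S), I would invoke the natural action of $S_n$ on $\Mbar_{g,n}$ permuting the markings: any $\sigma \in S_n$ acts by an automorphism that sends each $\psi_i$ to some $\psi_j$ (with indices controlled by $\sigma$), and since the fundamental class is preserved, a bookkeeping change of indices inside the integral gives $D(\ee \circ \sigma) = D(\ee)$. For (LC), I would reuse verbatim the Khovanskii--Teissier argument from Step~2 of the proof of Theorem~\ref{thm:main}(b): with $A = \psi_i$, $B = \psi_j$ and the remaining product $M = \prod_{k \neq i,j}\psi_k^{e_k}$ held as a fixed nef background, the KT inequality for nef classes gives
\[
D(\ee)^2 \;\geq\; D(\ee - \delta_i + \delta_j)\cdot D(\ee + \delta_i - \delta_j)
\]
whenever $e_i, e_j \geq 1$. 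The one technical point is transferring KT from a smooth projective scheme to the DM stack $\Mbar_{g,n}$; I would pass to a finite level-structure cover $\pi\colon Y \to \Mbar_{g,n}$ with $Y$ smooth projective, apply classical KT there (nefness of the $\psi$-classes is preserved by pullback, and intersection numbers scale by $\deg \pi$), and descend.

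The main obstacle is strict positivity (P). Nefness of the $\psi$-classes only yields $D(\ee) \geq 0$, so ruling out $D(\ee) = 0$ is a genuinely separate input---and it is also needed to make the multiplicative form of (LC) meaningful. In genus zero, the closed formula~\eqref{eq:genus0} gives $D(\ee) = (n-3)!/\prod_i e_i! > 0$ directly (the degenerate range $n < 3$ is excluded by stability). For $g \geq 1$, I would invoke the classical positivity of all pure $\psi$-descendant integrals, a well-known consequence of the Witten--Kontsevich theorem: one concrete proof uses the Virasoro/DVV recursion to express each $\langle \tau_{e_1}\cdots\tau_{e_n}\rangle_g$ as a nonnegative rational combination of descendant integrals with fewer markings or lower genus, terminating at the strictly positive base values $\langle \tau_{3g-2}\rangle_g = 1/(24^g g!)$ from~\eqref{eqn:onepoint}. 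Checking that at least one term in each recursion step survives (so the induction preserves strict positivity, not merely nonnegativity) is the subtlest bookkeeping and is where I would expect to spend the most care; alternatively one can simply cite positivity of pure $\psi$-integrals as a known consequence of Witten--Kontsevich.
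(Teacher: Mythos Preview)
Your proposal is correct and follows essentially the same route as the paper: symmetry via the $S_n$-action on $\Mbar_{g,n}$, log-concavity via Khovanskii--Teissier on a finite cover by a scheme (the paper uses an irreducible complete scheme rather than insisting on smooth projective, but the mechanism is identical), and positivity cited as a well-known consequence of the Witten--Kontsevich recursions. In fact, for (P) the paper is even terser than you are---it simply asserts positivity as well known and points to the recursive formulas, without working through the genus-zero/higher-genus split or the ``at least one term survives'' bookkeeping you flag.
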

\end{aiparagraph}

\begin{humanparagraph}
\begin{proof}
For the symmetry \textup{(S)}, we observe that for any $\sigma \in S_n$ with inverse $\tau = \sigma^{-1}$ there exists a natural isomorphism
\begin{equation}
    \phi_\sigma : \Mbar_{g,n} \to \Mbar_{g,n}, (C, p_1, \ldots, p_n) \mapsto (C, p_{\sigma(1)}, \ldots, p_{\sigma(n)})\,,
\end{equation}
which satisfies $\phi_\sigma^* \psi_i = \psi_{\tau(i)}$. Using the projection formula together with the pushforward relation $(\phi_\sigma)_* [\Mbar_{g,n}] = [\Mbar_{g,n}]$ of the fundamental class, we obtain:
\begin{align*}
    D(\textbf{e} \circ \sigma) &= \int_{\Mbar_{g,n}} \psi_1^{e_{\sigma(1)}} \cdots \psi_n^{e_{\sigma(n)}} = \int_{\Mbar_{g,n}} \psi_{\tau(1)}^{e_1} \cdots \psi_{\tau(n)}^{e_n}\\
    &= \operatorname{deg}\left(\psi_{\tau(1)}^{e_1} \cdots \psi_{\tau(n)}^{e_n} \frown [\Mbar_{g,n}] \right)\\
    &= \operatorname{deg}\left( (\phi_\sigma)^* (\psi_{1}^{e_1} \cdots \psi_{n}^{e_n}) \frown (\phi_\sigma)_* [\Mbar_{g,n}] \right)\\
    &= \operatorname{deg}\left( (\phi_\sigma)_* (\psi_{1}^{e_1} \cdots \psi_{n}^{e_n} \frown [\Mbar_{g,n}]) \right)\\
    &=\int_{\Mbar_{g,n}} \psi_1^{e_1} \cdots \psi_n^{e_n} = D(\textbf{e})\,.
\end{align*}
To prove property \textup{(LC)} we first recall from Variant 1.6.2 of \cite{MR2095471} that for $X$ an irreducible complete scheme of dimension $d \geq 2$ and nef divisor classes
\[
\alpha_1, \alpha_2, \beta_1, \ldots, \beta_{d-2} \in N_1(X)_{\mathbb{R}}
\]
we have
\begin{equation} \label{eqn:positivity_inequality}
\left(\alpha_1 \cdot \alpha_2 \cdot \beta_1 \cdots \beta_{d-2} \right)^2 \geq \left(\alpha_1^2 \cdot \beta_1 \cdots \beta_{d-2} \right) \cdot \left(\alpha_2^2 \cdot \beta_1 \cdots \beta_{d-2} \right)\,.
\end{equation}
Now fix $d=3g-3+n$ and $\ee \in E(n,d)$ and distinct $i, j$ with $e_i, e_j \geq 1$, and choose $\widetilde \alpha_1 = \psi_j$, $\widetilde \alpha_2 = \psi_i$ and $\widetilde \beta_1, \ldots, \widetilde \beta_{d-2} \in \{\psi_1, \ldots, \psi_n\}$ such that
\[
\psi_1^{e_1} \cdots \psi_n^{e_n} = \widetilde \alpha_1 \cdot \widetilde \alpha_2 \cdot \widetilde \beta_1 \cdots \widetilde \beta_{d-2}\,.
\]
Note that this step is just a reordering of factors, using that $e_i, e_j \geq 1$ to isolate the two factors $\widetilde \alpha_1$ and $\widetilde \alpha_2$. Let $\varphi: X \to \Mbar_{g,n}$ be a finite flat cover of degree $M$ by an irreducible complete scheme $X$ (see e.g. \cite[Theorem 7.4.2]{MR2007376}). Write $\alpha_i = \varphi^* \widetilde \alpha_i$ and $\beta_i = \varphi^* \widetilde \beta_i$. Since all $\psi$-classes are nef (\cite[Chapter XIV, Corollary 5.14]{MR2807457}) and this property is preserved under pullback by $\varphi$, we can apply \eqref{eqn:positivity_inequality} together with the projection formula and obtain:
\begin{align*}
    D(\ee)^2 &=\left(\int_{\Mbar_{g,n}} \widetilde \alpha_1 \cdot \widetilde \alpha_2 \cdot \widetilde \beta_1 \cdots \widetilde \beta_{d-2} \right)^2 = \frac{1}{M^2} \left( \int_X \alpha_1 \cdot \alpha_2 \cdot \beta_1 \cdots \beta_{d-2} \right)^2\\
    &\geq \left(\frac{1}{M} \int_X \alpha_1^2 \cdot \beta_1 \cdots \beta_{d-2} \right) \cdot \left(\frac{1}{M} \int_X \alpha_2^2 \cdot \beta_1 \cdots \beta_{d-2} \right)\\
    &= \left(\int_{\Mbar_{g,n}} \widetilde \alpha_1^2 \cdot \widetilde \beta_1 \cdots \widetilde\beta_{d-2} \right) \cdot \left(\int_{\Mbar_{g,n}} \widetilde \alpha_2^2 \cdot \widetilde \beta_1 \cdots \widetilde\beta_{d-2} \right)\\ &= D(\ee - \delta_i + \delta_j) \cdot D(\ee + \delta_i - \delta_j).
\end{align*}
Finally, the positivity of all descendant invariants $D(\ee)$ is well known, following e.g. from the recursive formulas for computing these invariants derived from the Witten--Kontsevich theorem \cite{Witten1991, Kontsevich1992}.
\end{proof}
\end{humanparagraph}

\begin{aiparagraph}
\begin{remark}
Theorem~\ref{thm:main}(a) and (b) follow immediately from Theorem~\ref{thm:main_theorem_paper} and Proposition~\ref{prop:D_satisfies_hypotheses}. The formula~\eqref{eqn:min} for the minimal value is then obtained by the string equation argument given in Section~\ref{sec:proof}.
\end{remark}

\subsection{Slice sequences} \label{sect:slice}

The proof of Theorem~\ref{thm:main_theorem_paper} proceeds by analyzing how $D$ varies along ``slices'' where only two coordinates change.

\begin{definition}[Slice sequence - \href{https://schmittj.github.io/balanced-vectors-blueprint/docs/BalancedVectors.html\#sliceSeq}{LEAN}]\label{def:sliceSeq}
Let $D : E(n,d) \to \QQ_{>0}$ satisfy \textup{(S)}, \textup{(LC)}, \textup{(P)}. Given $\ee \in E(n,d)$ and distinct indices $i \neq j$, define the \emph{slice sequence} $S : \{0, 1, \ldots, q\} \to \QQ_{>0}$ by
\[
S_t = D(\ee^{(t)}),
\]
where $q = e_i + e_j$ and $\ee^{(t)}$ is the composition agreeing with $\ee$ at all indices $k \notin \{i, j\}$, with $e^{(t)}_i = t$ and $e^{(t)}_j = q - t$.
\end{definition}

The key observation is that $S$ inherits strong structural properties from $D$.

\begin{lemma}[Palindromicity - \href{https://schmittj.github.io/balanced-vectors-blueprint/docs/BalancedVectors.html\#sliceSeq_palindromic}{\doublecheck, LEAN}]\label{lem:sliceSeq_palindromic}
The slice sequence satisfies $S_t = S_{q-t}$ for all $0 \leq t \leq q$.
\end{lemma}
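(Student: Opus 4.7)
The plan is to deduce palindromicity directly from the symmetry axiom (S), with no recourse to log-concavity or positivity. The key observation is that the composition $\ee^{(q-t)}$ is nothing but $\ee^{(t)}$ with the values at positions $i$ and $j$ swapped, i.e.\ its reindexing by the transposition $\tau = (i\,j) \in S_n$.

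Concretely, I would first verify the identity $\ee^{(q-t)} = \ee^{(t)} \circ \tau$ coordinate by coordinate: at position $i$ the left side equals $q-t$ while $(\ee^{(t)} \circ \tau)_i = e^{(t)}_{\tau(i)} = e^{(t)}_j = q-t$; symmetrically at $j$ both sides equal $t$; and at any $k \notin \{i,j\}$ both sides equal $e_k$ since $\tau$ fixes $k$. Then applying axiom (S) with the permutation $\tau$ gives
\[
S_{q-t} = D(\ee^{(q-t)}) = D(\ee^{(t)} \circ \tau) = D(\ee^{(t)}) = S_t,
\]
which is exactly the claim.

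There is no real obstacle here; the lemma is a one-line consequence of (S) once one unpacks the definition of $\ee^{(t)}$. The only thing to be careful about is the convention for $\ee \circ \sigma$, which must be matched with Definition~\ref{def:sliceSeq}; once that bookkeeping is done, the argument is purely formal and requires neither (LC) nor (P).
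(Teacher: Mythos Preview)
Your argument is correct and matches the paper's proof exactly: both apply the symmetry axiom (S) with the transposition $(i\,j)$ to the identity $\ee^{(t)} \circ (i\,j) = \ee^{(q-t)}$. Your coordinate-by-coordinate verification and the remark that neither (LC) nor (P) is needed are welcome elaborations, but the core idea is identical.
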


\begin{proof}
The transposition $\sigma = (i \; j) \in S_n$ exchanges the $i$-th and $j$-th coordinates. Since $\ee^{(t)} \circ \sigma = \ee^{(q-t)}$, symmetry \textup{(S)} gives $S_t = D(\ee^{(t)}) = D(\ee^{(q-t)}) = S_{q-t}$.
\end{proof}

\begin{lemma}[Log-concavity - \href{https://schmittj.github.io/balanced-vectors-blueprint/docs/BalancedVectors.html\#sliceSeq_logconcave}{\doublecheck, LEAN}]\label{lem:sliceSeq_logconcave}
The slice sequence satisfies $S_t^2 \geq S_{t-1} \cdot S_{t+1}$ for all $1 \leq t \leq q - 1$.
\end{lemma}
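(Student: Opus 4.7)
The plan is to reduce the claim directly to hypothesis (LC) of Theorem~\ref{thm:main_theorem_paper}, applied to the composition $\ee^{(t)}$ with the specific choice of indices $i$ and $j$ used in the definition of the slice sequence. The only real work is to check that the hypotheses of (LC) are satisfied and that the two perturbed vectors $\ee^{(t)} - \delta_i + \delta_j$ and $\ee^{(t)} + \delta_i - \delta_j$ coincide with the neighboring slice vectors $\ee^{(t-1)}$ and $\ee^{(t+1)}$.

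For the range $1 \leq t \leq q-1$, I would first observe that the $i$-th coordinate of $\ee^{(t)}$ equals $t \geq 1$ and the $j$-th coordinate equals $q-t \geq 1$, so the positivity hypothesis of (LC) at the chosen pair of indices is met. Next, I would unpack the definition of $\ee - \delta_i + \delta_j$: this operation decreases the $i$-th entry by $1$ and increases the $j$-th entry by $1$, leaving all other entries unchanged. Comparing with Definition~\ref{def:sliceSeq}, this produces a vector agreeing with $\ee$ outside $\{i,j\}$ with $i$-th coordinate $t-1$ and $j$-th coordinate $q - t + 1 = q - (t-1)$, which is exactly $\ee^{(t-1)}$. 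Analogously $\ee^{(t)} + \delta_i - \delta_j = \ee^{(t+1)}$.

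Substituting these identifications into (LC) then gives
\[
S_t^2 = D(\ee^{(t)})^2 \geq D(\ee^{(t)} - \delta_i + \delta_j) \cdot D(\ee^{(t)} + \delta_i - \delta_j) = D(\ee^{(t-1)}) \cdot D(\ee^{(t+1)}) = S_{t-1} \cdot S_{t+1},
\]
as desired. Since the argument is purely a book-keeping verification, I do not anticipate any real obstacle; the only point worth flagging is to make sure that the convention chosen for $\delta_i,\delta_j$ (which coordinate is decreased versus increased) matches the definition of $\ee^{(t)}$ in Definition~\ref{def:sliceSeq}, since swapping these conventions would only relabel $\ee^{(t-1)} \leftrightarrow \ee^{(t+1)}$ on the right-hand side and leave the product unchanged.
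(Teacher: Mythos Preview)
Your proof is correct and follows exactly the same approach as the paper: apply condition \textup{(LC)} to the composition $\ee^{(t)}$ at the indices $i,j$, note that $e^{(t)}_i = t \geq 1$ and $e^{(t)}_j = q-t \geq 1$, and identify the perturbed vectors with $\ee^{(t-1)}$ and $\ee^{(t+1)}$. Your additional remark about the $\delta_i,\delta_j$ convention is a nice sanity check but not needed, since the product $S_{t-1}S_{t+1}$ is symmetric in the two factors anyway.
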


\begin{proof}
For $1 \leq t \leq q - 1$, the composition $\ee^{(t)}$ has $e^{(t)}_i = t \geq 1$ and $e^{(t)}_j = q - t \geq 1$. Applying condition \textup{(LC)} to $\ee^{(t)}$ with indices $i, j$ yields
\[
D(\ee^{(t)})^2 \geq D(\ee^{(t)} - \delta_i + \delta_j) \cdot D(\ee^{(t)} + \delta_i - \delta_j) = D(\ee^{(t-1)}) \cdot D(\ee^{(t+1)}),
\]
which is exactly $S_t^2 \geq S_{t-1} \cdot S_{t+1}$.
\end{proof}

\subsection{Unimodality of log-concave palindromic sequences}

The following general lemma is the heart of the argument.

\begin{lemma}[Unimodality - \href{https://schmittj.github.io/balanced-vectors-blueprint/sect0001.html\#lem:unimodal}{\doublecheck}, \href{https://schmittj.github.io/balanced-vectors-blueprint/docs/BalancedVectors.html\#unimodal_of_logconcave_palindromic}{LEAN}]\label{lem:unimodal_of_logconcave_palindromic}
Let $S : \{0, 1, \ldots, q\} \to \QQ_{>0}$ be a positive sequence satisfying:
\begin{itemize}
    \item Palindromicity: $S_t = S_{q-t}$ for all $0 \leq t \leq q$.
    \item Log-concavity: $S_t^2 \geq S_{t-1} \cdot S_{t+1}$ for all $1 \leq t \leq q - 1$.
\end{itemize}
Then $S$ is unimodal with maximum at the center:
\begin{enumerate}
    \item[\textup{(i)}] $S_t \leq S_{t+1}$ whenever $2t < q$.
    \item[\textup{(ii)}] $S_t \leq S_{t-1}$ whenever $2t > q$.
\end{enumerate}
\end{lemma}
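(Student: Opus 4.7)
My plan is to reduce everything to monotonicity of the ratio sequence $R_t := S_{t+1}/S_t$, which is well-defined and positive for $0 \le t \le q-1$ by hypothesis (P). Dividing the log-concavity inequality $S_t^2 \ge S_{t-1} S_{t+1}$ by $S_{t-1} S_t > 0$ rewrites it as $R_{t-1} \ge R_t$, so $R$ is weakly decreasing on $\{0, 1, \ldots, q-1\}$. Palindromicity $S_t = S_{q-t}$ translates into a reciprocal symmetry: $R_t = S_{t+1}/S_t = S_{q-1-t}/S_{q-t} = 1/R_{q-1-t}$, equivalently $R_t \cdot R_{q-1-t} = 1$. These two facts are the only inputs the rest of the argument uses.

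With monotonicity and the reciprocal symmetry in hand, part (i) drops out by a short arithmetic step. Given integers $t, q$ with $2t < q$, we have $2t \le q-1$, hence $t \le q - 1 - t$; since $R$ is weakly decreasing, $R_t \ge R_{q-1-t}$. Combining with $R_{q-1-t} = 1/R_t$ yields $R_t^2 \ge 1$, and $R_t > 0$ forces $R_t \ge 1$, i.e.\ $S_{t+1} \ge S_t$.

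For part (ii), rather than repeat the argument I would deduce it from (i) via the involution $t \mapsto q - t$. If $2t > q$, then $2(q-t) < q$, so (i) applied at index $q - t$ gives $S_{q-t} \le S_{q-t+1}$; palindromicity rewrites both sides to yield $S_t \le S_{t-1}$.

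The main obstacle is almost entirely bookkeeping: one must check that the indices $q-1-t$ and $q-t$ lie in the correct ranges for monotonicity, palindromicity, and for defining $R$, and handle the degenerate cases $q=0$ (both conclusions vacuous) and $q=1$ (log-concavity is vacuous, but palindromicity directly gives $S_0 = S_1$, so (i) and (ii) hold with equality). Once the integer implication $2t < q \Rightarrow t \le q-1-t$ is stated explicitly, the proof is essentially one line of algebra on the ratios, which is presumably why the Lean formalization goes through smoothly.
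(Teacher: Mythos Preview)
Your proof is correct and follows essentially the same route as the paper's: define the ratio sequence $R_t$, use log-concavity to get monotonicity of $R$, use palindromicity to get the reciprocal symmetry $R_t R_{q-1-t}=1$, combine these to force $R_t\ge 1$ for $2t<q$, and deduce (ii) from (i) via the involution $t\mapsto q-t$. If anything, you are slightly more careful than the paper in spelling out the integer implication $2t<q\Rightarrow t\le q-1-t$ and in noting the degenerate cases $q=0,1$.
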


\begin{proof}
Define the ratio $R_t = S_{t+1}/S_t$ for $0 \leq t \leq q - 1$. Log-concavity implies that the ratios are weakly decreasing: from $S_t^2 \geq S_{t-1} S_{t+1}$ we obtain $S_t/S_{t-1} \geq S_{t+1}/S_t$, i.e., $R_{t-1} \geq R_t$.

Palindromicity gives a reflection identity for ratios:
\[
R_{q-1-t} = \frac{S_{q-t}}{S_{q-1-t}} = \frac{S_t}{S_{t+1}} = \frac{1}{R_t}.
\]
For $t$ with $2t < q - 1$, monotonicity of ratios gives $R_t \geq R_{q-1-t} = 1/R_t$, hence $R_t^2 \geq 1$. Since $R_t > 0$, we conclude $R_t \geq 1$, i.e., $S_{t+1} \geq S_t$. This proves (i).

Part (ii) follows from (i) by palindromicity: if $2t > q$, then $2(q - t) < q$, so $S_{q-t} \leq S_{q-t+1}$, which by palindromicity becomes $S_t \leq S_{t-1}$.
\end{proof}

\subsection{The balancing step}

\begin{lemma}[Balancing increases $D$ - \href{https://schmittj.github.io/balanced-vectors-blueprint/docs/BalancedVectors.html\#balancing_increases_D}{\doublecheck, LEAN}]\label{lem:balancing_increases_D}
Let $D$ satisfy \textup{(S)}, \textup{(LC)}, \textup{(P)}. If $\ee \in E(n,d)$ has $e_i \geq e_j + 2$ for some $i \neq j$, then
\[
D(\ee) \leq D(\ee - \delta_i + \delta_j).
\]
\end{lemma}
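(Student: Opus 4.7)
The plan is to realize $D(\ee)$ and $D(\ee - \delta_i + \delta_j)$ as two consecutive entries of the slice sequence from Definition~\ref{def:sliceSeq} and then invoke the unimodality Lemma~\ref{lem:unimodal_of_logconcave_palindromic}. First I would fix the pair of indices $(i,j)$ appearing in the hypothesis, set $q = e_i + e_j$, and form the slice sequence $S_t = D(\ee^{(t)})$ for $0 \leq t \leq q$. By construction $\ee^{(e_i)} = \ee$ and $\ee^{(e_i - 1)} = \ee - \delta_i + \delta_j$, so the desired inequality $D(\ee) \leq D(\ee - \delta_i + \delta_j)$ is exactly $S_{e_i} \leq S_{e_i - 1}$.

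Next I would verify the hypotheses of Lemma~\ref{lem:unimodal_of_logconcave_palindromic} for $S$: positivity comes from \textup{(P)}, palindromicity is Lemma~\ref{lem:sliceSeq_palindromic}, and log-concavity is Lemma~\ref{lem:sliceSeq_logconcave}. Then I would check the arithmetic condition $2t > q$ with $t = e_i$: from the assumption $e_i \geq e_j + 2$ we get $2 e_i \geq e_i + e_j + 2 = q + 2 > q$. Applying part (ii) of the unimodality lemma with this $t$ yields $S_{e_i} \leq S_{e_i - 1}$, which is the desired inequality.

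I expect no real obstacle here: the work has already been done in Lemmas~\ref{lem:sliceSeq_palindromic}, \ref{lem:sliceSeq_logconcave}, and \ref{lem:unimodal_of_logconcave_palindromic}, and this statement is just the translation of their conclusion back to the original function $D$. The only tiny care needed is to confirm that $e_i - 1 \geq 0$ (true since $e_i \geq e_j + 2 \geq 2$) so that the slice index $t - 1$ is in range, and to correctly identify which of $\ee$ and $\ee - \delta_i + \delta_j$ sits at index $e_i$ versus $e_i - 1$ in the slice---a matter of bookkeeping rather than mathematical content.
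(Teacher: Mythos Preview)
Your proposal is correct and matches the paper's proof essentially line for line: form the slice sequence at indices $(i,j)$, identify $D(\ee)$ and $D(\ee-\delta_i+\delta_j)$ with $S_{e_i}$ and $S_{e_i-1}$, check $2e_i>q$ from $e_i\geq e_j+2$, and apply part~(ii) of Lemma~\ref{lem:unimodal_of_logconcave_palindromic}. Your extra remarks verifying the hypotheses of the unimodality lemma and the range condition $e_i-1\geq 0$ are fine bookkeeping but not strictly needed.
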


\begin{proof}
Consider the slice sequence $S$ for $\ee$ with indices $i, j$. The original composition $\ee$ corresponds to $S_{e_i}$, and the modified composition $\ee - \delta_i + \delta_j$ corresponds to $S_{e_i - 1}$.

Since $e_i \geq e_j + 2$, we have $e_i > (e_i + e_j)/2 = q/2$, so $2e_i > q$. By the unimodality lemma (part (ii)), $S_{e_i} \leq S_{e_i - 1}$, which gives $D(\ee) \leq D(\ee - \delta_i + \delta_j)$.
\end{proof}

\begin{proof}[Proof of Theorem~\ref{thm:main_theorem_paper}(a)]
Starting from any $\ee \in E(n,d)$, repeatedly apply the balancing step: whenever there exist $i, j$ with $e_i \geq e_j + 2$, replace $\ee$ by $\ee - \delta_i + \delta_j$. By Lemma~\ref{lem:balancing_increases_D}, each step weakly increases $D$.

This process terminates in finitely many steps because the \emph{imbalance} $\sum_k e_k^2$ strictly decreases with each modification (one verifies $(a-1)^2 + (b+1)^2 < a^2 + b^2$ when $a \geq b + 2$). The process terminates precisely when no pair $(i, j)$ satisfies $|e_i - e_j| \geq 2$, i.e., when $\ee$ is balanced.

Thus every $\ee$ can be transformed to a balanced $\ee^*$ with $D(\ee) \leq D(\ee^*)$. In particular, the maximum of $D$ over the finite set $E(n,d)$ is achieved at a balanced vector.
\end{proof}

\subsection{The concentrating step} \label{sect:concentration}

\begin{lemma}[Concentrating decreases $D$ - \href{https://schmittj.github.io/balanced-vectors-blueprint/docs/BalancedVectors.html\#concentrating_decreases_D}{\doublecheck, LEAN}]\label{lem:concentrating_decreases_D}
Let $D$ satisfy \textup{(S)}, \textup{(LC)}, \textup{(P)}. If $\ee \in E(n,d)$ has $e_j \geq 1$ and $e_j \leq e_i$ for some $i \neq j$, then
\[
D(\ee - \delta_j + \delta_i) \leq D(\ee).
\]
\end{lemma}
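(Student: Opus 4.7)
The plan is to mirror the proof of Lemma~\ref{lem:balancing_increases_D}, but to invoke the \emph{descending} half of the unimodality lemma instead of the ascending half. First I would form the slice sequence $S : \{0, 1, \ldots, q\} \to \QQ_{>0}$ associated to $\ee$ and the ordered pair of indices $(i,j)$, where $q = e_i + e_j$, as in Definition~\ref{def:sliceSeq}. By Lemmas~\ref{lem:sliceSeq_palindromic} and~\ref{lem:sliceSeq_logconcave} together with strict positivity \textup{(P)}, this sequence is positive, palindromic, and log-concave, so Lemma~\ref{lem:unimodal_of_logconcave_palindromic} applies.

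Next I would translate the two vectors appearing in the statement into values of $S$. Directly from the definition of the slice, $D(\ee) = S_{e_i}$, while moving one unit from the $j$-th to the $i$-th coordinate gives $D(\ee - \delta_j + \delta_i) = S_{e_i + 1}$. The hypothesis $e_j \geq 1$ ensures both that $\ee - \delta_j + \delta_i$ is a valid composition and that $e_i + 1 \leq q$, so this index lies in the domain of $S$.

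Finally, the hypothesis $e_j \leq e_i$ gives
\[
2(e_i + 1) \;=\; 2e_i + 2 \;\geq\; (e_i + e_j) + 2 \;>\; q,
\]
so Lemma~\ref{lem:unimodal_of_logconcave_palindromic}(ii) applied with $t = e_i + 1$ yields $S_{e_i + 1} \leq S_{e_i}$, which is precisely the desired inequality $D(\ee - \delta_j + \delta_i) \leq D(\ee)$.

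I do not anticipate any real obstacle here: everything is dual to the balancing argument already carried out, and the only thing demanding care is the indexing. In particular, one must notice that the weak hypothesis $e_j \leq e_i$ (rather than a strict inequality) is still sufficient, because bumping the $i$-th coordinate by one additional unit pushes the index $t$ strictly past $q/2$ and into the descending regime of the unimodal sequence $S$.
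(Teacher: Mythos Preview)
Your proof is correct and follows the same strategy as the paper's---forming the slice sequence and applying the unimodality lemma---except that you index the slice by the $i$-th coordinate and invoke part~(ii), whereas the paper indexes by the $j$-th coordinate and invokes part~(i). Your version is in fact slightly cleaner: the strict inequality $2(e_i+1)>q$ holds uniformly, so you avoid the paper's separate treatment of the boundary case $2e_j=q$.
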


\begin{proof}
Consider the slice sequence $S$ for $\ee$ with indices $j, i$ (note the order). The original composition corresponds to $S_{e_j}$, and the modified composition $\ee - \delta_j + \delta_i$ corresponds to $S_{e_j - 1}$.

Since $e_j \leq e_i$, we have $2e_j \leq e_j + e_i = q$. If $2e_j < q$, then by unimodality (part (i)), $S_{e_j - 1} \leq S_{e_j}$, giving the result. If $2e_j = q$, then $e_j = e_i$ and palindromicity gives $S_{e_j - 1} = S_{q - (e_j - 1)} = S_{e_j + 1}$; unimodality then gives $S_{e_j - 1} = S_{e_j + 1} \leq S_{e_j}$.
\end{proof}

\begin{proof}[Proof of Theorem~\ref{thm:main_theorem_paper}(b)]
Starting from any $\ee \in E(n,d)$, repeatedly apply the concentrating step: let $i$ be an index achieving the maximum $\max_k e_k$, and if there exists $j \neq i$ with $e_j \geq 1$, replace $\ee$ by $\ee - \delta_j + \delta_i$. By Lemma~\ref{lem:concentrating_decreases_D}, each step weakly decreases $D$.

This process terminates because the maximum entry $\max_k e_k$ strictly increases with each step (and is bounded by $d$). The process terminates when all mass is concentrated at a single index, i.e., when $\ee = d \cdot \delta_k$ for some $k$.

Thus every $\ee$ can be transformed to a concentrated $\ee_*$ with $D(\ee_*) \leq D(\ee)$. In particular, the minimum of $D$ is achieved at a concentrated vector.
\end{proof}

This completes the proof of Theorem~\ref{thm:main_theorem_paper}.
\end{aiparagraph}

\appendix

\section{Methodology: AI-assisted discovery and proof}\label{sec:methods}
\begin{aiparagraph}

This appendix documents the role of AI systems in discovering the conjecture and producing the proof of Theorem~\ref{thm:main}(b).

\subsection{Discovery of the conjecture}

The initial observation arose during a hackathon on AI-assisted mathematics organized by Jaume de Dios Pont at ETH Z\"urich, focused on OpenEvolve \humaninline{\cite{openevolve} --- an open-source implementation of the AlphaEvolve system \cite{novikov2025alphaevolvecodingagentscientific} for evolutionary search over functions. Inspired by recent applications of such systems for mathematical exploration \cite{georgiev2025mathematicalexplorationdiscoveryscale}}, the author chose as a test problem the optimization of descendant integrals on $\Mbar_{g,n}$ for fixed $g$ and $n$.

While setting up the project using Claude Code (running Anthropic's Claude Opus~4.5), the AI assistant observed from initial computations:
\begin{quote}
\itshape
``The interesting pattern is that for $g=0$, spreading the exponents seems to give larger values (e.g., $[1,1,1,0,0,0] = 6$ vs.\ $[3,0,0,0,0,0] = 1$).''
\end{quote}
In the configuration file for OpenEvolve, the model included:
\begin{quote}
\ttfamily\small
Mathematical insight for genus 0:\\
- The integral equals (n-3)! / (e\_1! * e\_2! * ... * e\_n!)\\
- This is maximized when the exponents are as evenly distributed as possible\\
- For example, with n=6 and dim=3, [1,1,1,0,0,0] gives 6, while [3,0,0,0,0,0] gives 1\\
For higher genus:\\
- The formula is more complex (Witten-Kontsevich theorem)\\
- General pattern: spreading exponents tends to give larger values\\
- But the optimal distribution may differ from genus 0
\end{quote}
The evolutionary search then quickly converged to balanced vectors as optima.

\subsection{Submission to IMProofBench}

Based on these observations, the author formulated the conjecture (for the maximum; the minimum was added later) and submitted it to IMProofBench \cite{IMProofBench}, a benchmark for evaluating AI systems on research-level mathematical proof generation.

The problem was evaluated against several large language models in an agentic framework with access to SageMath and other computational tools. Results varied significantly:
\begin{itemize}
    \item Several models produced inconclusive responses: proofs for genus~0 only, or vague suggestions for higher genus.
    \item Some models claimed false counterexamples.
    \item At least one model cited a reference that appears to be entirely hallucinated:
    \begin{quote}
    \itshape
    ``Zhou, Y. (2018). A proof of the concavity conjecture for intersection numbers of $\psi$-classes. Journal of Differential Geometry, 110(2), 361--389.''
    \end{quote}
    \humaninline{This paper does not exist; the above volume of JDG instead features the paper \cite{MR3861813} on pages 345--377, being the last paper in this issue.}
    \item Some models used the provided SageMath tools to verify the conjecture computationally for small values of $g$ and $n$.
    \item Multiple evaluations of \humaninline{the AI models o3, GPT-5 and GPT-5 Pro} converged on the proof strategy using nefness and Khovanskii--Teissier log-concavity presented in Section~\ref{sec:proof}.
\end{itemize}

The proof in Section~\ref{sec:proof} is taken from one of the GPT-5 evaluations. Interestingly, the single evaluation of the problem against GPT-5.1 did not solve the problem.
\end{aiparagraph}

\subsection{Partial formalization and Lean Blueprint}
\begin{humanparagraph}
Based on discussions with colleagues and inspired by \href{https://emilyriehl.github.io/files/norms-hardy.pdf#page=33}{a proposal of Emily Riehl}, we also decided to formalize at least parts of the proof found by GPT-5, as presented in Section \ref{sec:formalized}. To our knowledge, the prerequisite results in algebraic geometry going into the proof (divisors on algebraic schemes, intersection numbers, nef classes, moduli spaces of curves, the Witten--Kontsevich theorem, etc) are not yet formalized in Lean. Thus we decided to split the proof in a purely combinatorial optimization result (Theorem \ref{thm:main_theorem_paper}), and a geometric part proved non-formally (Theorem \ref{prop:D_satisfies_hypotheses}).

As for the work of formalization itself: the author has not had any prior experience with Lean or the mathlib. For all manipulations of \aiinline{the} respective \texttt{.lean} file we used Claude Code (Opus 4.5) which loaded the \href{https://github.com/cameronfreer/lean4-skills}{Lean 4 Skills} collection developed by Cameron Freer. For particularly tricky problems or intermediate planning we also copy-pasted summaries of the current formalization state by Claude together with the latest version of the Lean file into ChatGPT 5.2, pasting its answer back to Claude Code.

Progress was slow, but steady, and after several hours of work the Lean file compiled without errors or missing proofs. During this time, the contribution of the author was confined to:
\begin{itemize}
    \item providing an initial description of the result to be formalized, with existing AI-generated proof for context,
    \item giving very few pointers on possible mathematical issues,
    \item occasionally soliciting updates, general management of the order of work, and interventions via GPT 5.2 commentary when progress stagnated.
\end{itemize}
In particular, we did not edit a single line of \texttt{.lean} code. A transcript of the conversation can be found \href{https://johannesschmitt.gitlab.io/balanced_vectors_lean_conversation.txt}{here}.

After the formalization was complete, the Lean code was made available via a GitHub repository using the \href{https://github.com/PatrickMassot/leanblueprint}{Lean blueprint} plugin developed by Patrick Massot and collaborators. Again, with exceptions of a few clicks on the GitHub website and  debugging information provided by the author, all files were created and edited by Claude Code.

The resulting annotated file \texttt{balanced-vectors-blueprint.lean} was then converted back to the human-readable proofs in Section \ref{sec:formalized} by \href{https://claude.ai/share/475ce24b-1d24-44c1-883d-2dd3ae1181d9}{Claude Opus 4.5}. In addition, we want to thank Eric Vergo, who refactored the original file \href{https://github.com/e-vergo/Balanced_Vectors}{in a separate repository} to make it easier to quickly inspect the formalization of the main theorem, without having to trace through the entire Lean file.

The whole process of creating the original Lean formalization and blueprint above took about a day to complete.
\end{humanparagraph}

\subsection{Paper preparation}
\begin{aiparagraph}
This paper was prepared with assistance from Claude (Anthropic), which helped structure the document, identify issues in the AI-generated proof (including the incorrect genus-0 formula in the original remarks), and draft portions of this methodology section.
\end{aiparagraph}
\begin{humanparagraph}
Below we collect some links to shared example conversations with AI models, which include the prompts used to get the relevant outputs:
\begin{itemize}
    \item \href{https://claude.ai/share/6dd9c266-57d7-4a89-82a4-2fb996c86bef}{Paper drafting session} (Claude Opus 4.5)
    \item \href{https://claude.ai/public/artifacts/213f5978-f358-4865-a8ca-c3ef784e0acb}{Summary of the IMProofBench project} and \href{https://claude.ai/public/artifacts/c822fac4-8908-4c64-b0cc-645c4af6a8dd}{literature overview} created to provide context for the paper drafting (Claude Opus 4.5 + Research Mode)
    \item \href{https://gemini.google.com/share/ab164342674e}{Reference for \eqref{eqn:onepoint}} (Gemini 3 Pro), \href{https://chatgpt.com/share/693fe7a9-7da4-8006-9f21-183f991a7efb}{Reference for finite cover of $\Mbar_{g,n}$ by scheme} (ChatGPT 5.2)
    \item \href{https://gemini.google.com/share/591699ebcecc}{Proof of Theorem~\ref{thm:main}(a)} (Gemini 3 Pro)
    \item \href{https://claude.ai/share/a1fcf89b-6a34-439f-97ea-46666be71c13}{Writing code to verify equation \eqref{eqn:dilaton} in examples } (Claude Opus 4.5)
    \item \href{https://claude.ai/share/475ce24b-1d24-44c1-883d-2dd3ae1181d9}{Writing the LaTeX proofs of Section \ref{sec:formalized}} based on the existing \href{https://schmittj.github.io/balanced-vectors-blueprint/index.html}{Lean Blueprint file} (Claude Opus 4.5)
    \item Examples of Lean assistance provided by ChatGPT 5.2 (\href{https://chatgpt.com/share/694000dd-5618-8006-ab20-924a00bc0e8e}{Conversation 1}, \href{https://chatgpt.com/share/69400106-5118-8006-8b0f-847d6e9c34b8}{Conversation 2}, \href{https://chatgpt.com/share/6940011d-bfa8-8006-8101-683893a60104}{Conversation 3}, \href{https://chatgpt.com/share/69400132-bcac-8006-80ad-72b645ed8185}{Conversation 4})
    \item Creating LaTeX environments for labelling human and AI paragraphs : \href{https://claude.ai/share/27e08d7b-5118-481d-b374-05192aab1ff4}{Version 1}, \href{https://claude.ai/share/969fdd46-3991-44c5-a409-5f79fb2be0dc}{Version 2}, \href{https://claude.ai/share/8a36b89e-79ae-4911-aff7-94960821d350}{Version 3} (Claude Opus 4.5)
\end{itemize}
The first raw draft of the paper was also submitted to Claude Code, \href{https://gemini.google.com/share/65ff8387279c}{Gemini 3 Pro} and \href{https://chatgpt.com/share/693833a5-3e50-8006-b3fd-6592f456f1fb}{ChatGPT 5.1} for feedback and proofreading. This resulted in several improvements, e.g.
\begin{itemize}
    \item Restricting to $n>0$ in Theorem~\ref{thm:main} to avoid statements about mimima/maxima of the empty set, and adding the term $\tau_0^{n-1}$ in that theorem, for clarity.
    \item Pointing out that in the proof of Theorem~\ref{thm:main}(a), the case $g=0$ cannot reduce all the way to $n=1$ via the string equation (see the corresponding human correction).
\end{itemize}
The usage of the described AI models was covered by subscriptions in the form of Claude Max (\$100/month), ChatGPT Plus (\$20/month) and Google AI Pro (\$20/month).
\end{humanparagraph}

\section{Best practices for attribution of AI use} \label{sec:AI_attribution}
\begin{humanparagraph}
After some first pioneering papers \cite{ghrist2024latticevaluedbottleneckduality, raz2025euleraiunifyingformulas}, recent months have seen a steadily growing list of articles  \cite{avvakumov2025tensorrankdeterminantperiodic, jang2025pointconvergencenesterovsaccelerated, alexeev2025forbiddensidonsubsetsperfect, ivanisvili2025counterexamplemajorityoptimalitynicd, datar2025complexmongeampereequationapplication, fortnow2025searchversusdecisionmathsfs2mathsfp, dobriban2025solvingresearchproblemmathematical, bubeck2025earlyscienceaccelerationexperiments}  using Large Language Models both to study mathematical questions and writing up the results. {See e.g. the \href{https://github.com/seewoo5/awesome-ai-for-math/blob/main/subjects/llm.md}{\texttt{awesome-ai-for-math}} repository maintained by Seewoo Lee for a list of examples.}

Given the potential impact of these tools, it is important for the mathematical community to develop some norms and best practices for attributing their use in our work. Below we propose some ideas for such measures, and apply these to the present article.

\vspace{5pt}
\noindent \textbf{Methodology section}\\
Mathematical papers that benefited from using LLMs, Deep-Learning, proof assistants, or other software tools should include a section clearly attributing this usage. This section should include or reference any sample prompts, data repositories or code files that could help others to examine and reproduce the employed methods, and to adapt them for their own research.

\vspace{5pt}
\noindent \textbf{Transparency about writing tools}\\
When using AI tools for writing first drafts of parts of the paper (e.g. literature overviews, abstract or content summary, proof sketches, etc), all paragraphs with substantial AI contributions could be typeset in a visually distinctive way. 
\end{humanparagraph}
\begin{aiparagraph}
This paragraph was generated by Claude Opus 4.5 (Anthropic) to demonstrate the visual formatting conventions proposed in this appendix. The orange margin bar and label allow readers to immediately identify AI-authored content, complementing the blue markers used for human-written passages.
\end{aiparagraph}

\begin{humanparagraph}
A first LaTeX template for such sidebars (created joint\aiinline{ly} with Claude Opus 4.5) is available here:
\begin{center}
    \url{https://johannesschmitt.gitlab.io/attribution-macros.tex}
\end{center}

\vspace{5pt}
\noindent \textbf{Contextualization for non-expert readers}\\
Given the current public interest in AI applications for academic research, results obtained with the help of AI tend to gain increased levels of attention. In particular, those papers are exposed to  an audience outside of the specialized area in which the result would usually be received. 
This creates some incentive, both for the author and the developers of the relevant AI model, to overstate the importance of the presented result. Thus we believe it is the responsibility of the author to contextualize the significance of the presented work for non-experts, ideally in a somewhat prominent place (as with the Author's Note on the first page of this article).
\end{humanparagraph}

\bibliographystyle{alpha}
\bibliography{references}

\end{document}